\numberwithin{equation}{section}
\makeatletter\@addtoreset{equation}{section} 
			\newtheorem{theorem}{Theorem}[section]
			\newtheorem{proposition}[theorem]{Proposition}
			\newtheorem{remark}[theorem]{Remark}
\newcommand{\C}{\mathbb C}    \newcommand{\R}{\mathbb R}   \newcommand{\Z}{\mathbb Z} 	 
\newcommand{\Di}{\mathbb{D}}
\newcommand{\gm}{\gamma}
\newcommand{\HolD}{\mathcal{H}ol(\Di)}
\newcommand{\HnDi}{\mathcal{H}^{2}}
\newcommand{\AnTDi}{\mathcal{A}^{2}}
\newcommand{\HnWDi}{H^{2,\gamma}}
\newcommand{\AnTWDi}{\mathcal{A}^{2,\gamma}}
\newcommand{\norm}[1]{{\left\|{#1}\right\|}}    
   \newcommand{\scal}[1]{{\left\langle{#1}\right\rangle}}
    \newcommand{\bz}{\overline{z}}
  \newcommand{\bw}{\overline{w}}
\newcommand{\minmn}{m\wedge n}  
\begin{document}

\title[]{A note on weighted poly-Bergman spaces}
\author[R. El Harti, A. Elkachkouri, A. Ghanmi]{R. El Harti,, A. ElKachkouri, A. Ghanmi}
 
 \email{rachid.elharti@uhp.ac.ma}
 \email{elkachkouri.abdelatif@gmail.com} 
 \email{allal.ghanmi@um5.ac.ma} 
 \address{Analysis, P.D.E. and Spctral Geometry, Lab. M.I.A.-S.I., CeReMAR, Department of Mathematics, P.O. Box 1014, Faculty of Sciences, Mohammed V University in Rabat, Morocco}
 \address{Department of Mathematics and Computer Sciences, Faculty of Sciences and
 	Techniques,
 	University Hassan I, BP 577 Settat, Morocco}

\begin{abstract}
 We introduce à la Vasilevski the weighted poly-Bergman spaces in the unit disc and provide concrete orthonormal basis and give close expression of their reproducing kernel. The main tool in the description if these spaces is the so-called disc polynomials that form an orthogonal basis of the whole weighted Hilbert space.
\end{abstract}

\keywords{Disc polynomials; Weighted true Poly-Bergman spaces; Reproducing kernel}
 

\maketitle

\section{Introduction and statement of main results} 

The present note is concerned with the weighted poly-Bergman space in the unit disc $\Di=\{z\in \C; \, |z|^2=z\bz<1\}$ defined as the specific generalization of weighted Bergman spaces to polyanalytic setting.
 Recall that polyanalytic functions of order $n$ (or $n$-analytic), which are natural generalization of holomorphic functions,  are the solutions of the generalized Cauchy equation $\partial^{n}_{\bz} f=0$,  
where  $\partial_{\bz} $ denotes the Wirtinger differential operator  
$$  \partial_{\bz} =\frac{\partial}{\partial \bz}  =\frac{1}{2} \left( \frac{\partial}{\partial x} + i \frac{\partial}{\partial y}\right) .$$
Such functions, bianalytic ones $ f = \bz \varphi_1+ \varphi_0$ were  appeared  first in the study of plane elasticity by Kolossov's  \cite{Kolossov1908papers}.
Since then, they have been extensively studied 
and  have numerous applications in mechanics and mathematical physics 
\cite{Muskhelishvili1968,Wendland1979,HaimiHedenmalm2013}.
Recently, they have found interesting applications in signal processing, time--frequency analysis and
wavelet theory \cite{Grochenig2001,GrochenigLyubarskii2009,AbreuFeichtinger2014}.
For a complete survey on these functions one can refer to \cite{Balk1991}.

The extension of the classical Bergman space  $\mathcal{A}^{2}(\Di) := L^{2}\left(\Di; dxdy\right) \cap \HolD$  
to the context of polyanalytic functions was proposed by Koshelev \cite{Koshelev1977}, who proved that the set $\HnDi_n(\Di)$ of $n+1$-analytic complex-valued functions in $\Di$ belonging to  $L^{2}\left(\Di; dxdy\right)$ is a reproducing kernel Hilbert space for which the functions
\begin{align}\label{koshelevemp}
e_{m,p} (z) :=   \frac{\sqrt{m+p+1}}{\sqrt{\pi}(m+p)!} \frac{\partial^{m+p}}{\partial z^p \partial \bz^m}\left( (|z|^2-1)^{m+p} \right), 
\end{align}
for varying $p=0,1,2,\cdots, n$ and $m=0,1,2,\cdots$, form a complete  orthonormal polynomial system.
Subsequently the so-called true poly-Bergman spaces (according to Vasilevski's terminology)
are particular subspaces $\AnTDi_{n}(\Di)$ giving rise to piecewise decomposition of $\HnDi_n(\Di)$.
Namely,  \cite{Ramazanov1999}
	$$\AnTDi_{n}(\Di)=\left\lbrace f:f(z)= \partial^{n}_z \left((1-z\overline{z})^{n}F(z) \right), F\in \mathcal{A}(\Di), f\in \HnDi_{n}(\Di)   \right\rbrace $$
with orthogonal basis given by the system of polynomials $\{e_{m,n}; m=0,1,\cdots,\}$. Its direct sum forms the full $\HnDi_{n}(\Di)$ space,
$\HnDi_{n}(\Di)=\AnTDi_{0}(\Di)\oplus\AnTDi_{1}(\Di)\oplus\ldots\oplus\AnTDi_{n}(\Di)$ with  $\AnTDi_{0}(\Di)=\mathcal{A}^{2}(\Di)=\HnDi_{0}$.
The involved operator {$D_{n} f :=  \partial^{n}_{z}((1-|z|^{2})^{n}u(z))$} defines a bounded and boundedly invertible operator from $\mathcal{A}^{2}(\Di)$  onto $\AnTDi_{n}(\Di)$ (see e.g. \cite[Theorem 1]{Ramazanov1999}).
For more details on these spaces, one can refer  e.g.  \cite{Koshelev1977,Ramazanov1999,Vasilevski2000,RozenblumVasilevski2019}.

The weighted version $\mathcal{A}^{2,\gm}(\Di)$ of the Bergman space  is defined as the closed subspace of  $L^{2,\gamma}(\Di):=L^{2}\left(\Di,d\mu_{\gamma}\right)$, the Hilbert space of all complex-valued functions in $\Di$ with the norm induced from the scalar product
$$ \scal{ f,g}_\gm := \int_{\Di} f(\xi) \overline{g(\xi)} d\mu_\gm(\xi), $$
where  respect to the weighted (probability) measure
\begin{align} \label{weightg} d\mu_\gamma(z) =  (1-|z|^2)^{\gamma} dxdy; \, \, z=x+iy \in \Di.
\end{align} 
It is a reproducing kernel Hilbert space with kernel given by 
\begin{align} \label{RK-wBs}
K^{\gm}(z,w)= \frac{\gm+1}{\pi \left( 1 -  z \bw\right)^{\gm+2}} .
\end{align}

We define the weighted poly-Bergman space $\HnWDi_n(\Di)$  as the space of polyanalytic functions of order $n+1$ that are square integrable on $\Di$ with respect to the radial weight function $(1-|z|^2)^{\gamma}$, so that $\HnWDi_0(\Di)=\mathcal{A}^{2,\gm}(\Di)$. To wit 
$$\HnWDi_n(\Di) = \{ F \in L^{2,\gamma}(\Di); \, \partial_{\bz}^{n+1} F =0  \}.  $$  
The concrete description of $\HnWDi_n(\Di)$ made appeal to the so-called disc polynomials defined by
\begin{align}\label{DiscePol}
\mathcal{R}_{m,n}^\gamma (z,\bar z)
= \frac{z^m \bz^n}{|z|^{2(m\wedge n)}}P^{(\gamma,|m-n|)}_{m\wedge n} (2|z|^2-1),
\end{align} 
where $m\wedge n=\min(m,n)$ and $P_{m}^{(\alpha,\beta)}(x)$ denotes the real Jacobi polynomials normalized so that $P_{m}^{(\alpha,\beta)}(1)=1$. 
See Section 2 for a brief review of some basic facts on  $\mathcal{R}_{m,n}^\gamma$. The basic properties of the space $A^{2,\gamma}_n(\Di)$ are presented in Section 3. Mainly, we prove the following theorem. 

\begin{theorem}\label{thm1}
	The weighted poly-Bergman spaces
	$\HnWDi_n(\Di) $ are closed subspaces of $L^{2,\gamma}(\Di)$. Moreover, they coincide with those spanned by the disc polynomials $\mathcal{R}^{\gamma}_{m,k}$ for varying $m=0,1,2, \cdots,$ and $k=0,1,\cdots,n$.
\end{theorem}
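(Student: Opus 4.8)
The plan is to combine the completeness of the disc polynomials in $L^{2,\gamma}(\Di)$ (recalled in Section 2) with a separation‑of‑variables computation of the pairing of a polyanalytic function against each $\mathcal{R}^{\gamma}_{m,k}$, the vanishing of the ``high'' coefficients being reduced to the orthogonality of Jacobi polynomials. The decisive preliminary is the elementary structural fact that $\mathcal{R}^{\gamma}_{m,k}$ is a polynomial in $z,\bz$ whose degree in $\bz$ equals exactly $k$. Indeed, since $P^{(\gamma,|m-k|)}_{m\wedge k}(2|z|^2-1)$ is a polynomial of degree $m\wedge k$ in $z\bz$, the prefactor $z^m\bz^k/|z|^{2(m\wedge k)}$ reduces to $z^{m-k}$ when $m\ge k$ and to $\bz^{k-m}$ when $m<k$; in either case the monomials occurring are $z^a\bz^b$ with $a-b=m-k$ and $0\le b\le k$, the top power $b=k$ being genuinely present. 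Hence $\partial_{\bz}^{k+1}\mathcal{R}^{\gamma}_{m,k}=0$ while $\partial_{\bz}^{k}\mathcal{R}^{\gamma}_{m,k}\neq0$, so $\mathcal{R}^{\gamma}_{m,k}$ is polyanalytic of order exactly $k+1$, and being a polynomial it lies in $L^{2,\gamma}(\Di)$ (as $\gamma>-1$). Thus $\mathcal{R}^{\gamma}_{m,k}\in\HnWDi_n(\Di)$ precisely when $k\le n$, which already places each such generator, and hence their closed span $W_n:=\overline{\mathrm{span}}\{\mathcal{R}^{\gamma}_{m,k}:m\ge0,\,0\le k\le n\}$, inside $\HnWDi_n(\Di)$ once the latter is known to be closed.

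Closedness of $\HnWDi_n(\Di)$ I would obtain in the standard way. If $F_j\to F$ in $L^{2,\gamma}(\Di)$ with $\partial_{\bz}^{n+1}F_j=0$, then since $(1-|z|^2)^{\gamma}$ is bounded below on every compact subset of $\Di$, the convergence is in $L^2_{\mathrm{loc}}(\Di,dxdy)$; passing to the limit in $\int_{\Di}F_j\,\partial_z^{n+1}\phi\,dxdy=0$ (integration by parts, $\phi\in C_c^\infty(\Di)$) yields $\partial_{\bz}^{n+1}F=0$ in the distributional sense, and hypoellipticity of $\partial_{\bz}^{n+1}$ forces $F$ to agree a.e.\ with a genuine polyanalytic function of order $n+1$. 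Hence $\HnWDi_n(\Di)$ is closed and contains $W_n$.

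For the reverse inclusion I would compute $\scal{F,\mathcal{R}^{\gamma}_{m,k}}_{\gm}$ for $F\in\HnWDi_n(\Di)$ and $k>n$, and show it vanishes; since the disc polynomials form an orthogonal basis, the orthogonal complement of $W_n$ is the closed span of the $\mathcal{R}^{\gamma}_{m,k}$ with $k>n$, so the vanishing of all these pairings places $F$ in $W_n$. Writing $z=re^{i\theta}$ and $\mathcal{R}^{\gamma}_{m,k}=e^{i(m-k)\theta}r^{|m-k|}P^{(\gamma,|m-k|)}_{m\wedge k}(2r^2-1)$, only the angular frequency $\nu=m-k$ of $F$ survives, so the pairing collapses to a one–dimensional integral in $t=r^2$ against the Jacobi weight $t^{|\nu|}(1-t)^{\gamma}$. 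To control the radial component $F_\nu(r)$, note that $F$ is smooth (ellipticity) and that $\partial_{\bz}$ raises the frequency by one while acting on the frequency‑$\nu$ radial part by $\tfrac12(\partial_r-\tfrac{\nu}{r})$; thus $\partial_{\bz}^{n+1}F=0$ decouples frequency by frequency into an order‑$(n+1)$ equidimensional ODE whose solutions are spanned by $r^{\nu+2j}$, $j=0,\dots,n$. Combined with the vanishing of a smooth function's frequency‑$\nu$ part to order $|\nu|$ at the origin, this gives $F_\nu(r)=r^{|\nu|}q_\nu(r^2)$ with $\deg q_\nu\le n-\max(0,-\nu)$. Substituting, the pairing becomes a constant times
\[
\int_0^1 q_\nu(t)\,P^{(\gamma,|\nu|)}_{m\wedge k}(2t-1)\,t^{|\nu|}(1-t)^{\gamma}\,dt .
\]
A short bookkeeping shows $\deg q_\nu<m\wedge k$ whenever $k>n$: if $m\ge k$ then $m\wedge k=k>n\ge\deg q_\nu$, while if $m<k$ then $m\wedge k=m$ and $\deg q_\nu\le n-|\nu|=n-k+m<m$. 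In both regimes Jacobi orthogonality kills the integral, so $\scal{F,\mathcal{R}^{\gamma}_{m,k}}_{\gm}=0$ for all $k>n$, i.e.\ $F\in W_n$, and we conclude $\HnWDi_n(\Di)=W_n$, which is closed.

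The step I expect to be the main obstacle is the sharp degree bound on $q_\nu$ together with the attendant case analysis: one must track how the admissible vanishing order at the origin depends on the sign of the frequency $\nu$, since it is precisely the refinement $\deg q_\nu\le n-|\nu|$ in the regime $m<k$ (and not merely $\deg q_\nu\le n$) that is needed to invoke Jacobi orthogonality there. Everything else—the degree computation, the completeness of the disc polynomials, and the elliptic‑regularity closedness—is either structural or standard.
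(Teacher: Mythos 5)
Your proof is correct, but it follows a genuinely different route from the paper's. The paper expands an arbitrary $f\in L^{2,\gamma}(\Di)$ in the complete orthogonal system $\{\mathcal{R}^{\gamma}_{m,j}\}$ and then invokes the differentiation formula $\partial_{\bz}^{k}\mathcal{R}^{\gamma}_{m,j}\propto \mathcal{R}^{\gamma+k}_{m,j-k}$ to identify the coefficients $a_{m,j}$ with $j\geq k+1$ as (multiples of) the Fourier coefficients of $\partial_{\bz}^{k+1}f$ in the shifted-weight space $L^{2,\gamma+k+1}(\Di)$; the membership $f\in\HnWDi_n(\Di)$ is then read off as the vanishing of the high coefficients, and closedness is a corollary of the resulting span description. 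You instead (a) prove closedness up front by a soft distributional argument plus hypoellipticity of $\partial_{\bz}^{n+1}$, (b) check by a degree count in $\bz$ that each generator $\mathcal{R}^{\gamma}_{m,k}$, $k\leq n$, lies in $\HnWDi_n(\Di)$, and (c) obtain the reverse inclusion by computing $\scal{F,\mathcal{R}^{\gamma}_{m,k}}_{\gm}$ for $k>n$ in polar coordinates, reducing the angular integral to a single frequency and killing the radial integral by Jacobi orthogonality after bounding $\deg q_\nu$. Both arguments rest on Proposition \ref{OrthBasisR}. What your route buys is that it avoids the derivative formula for disc polynomials and the shifted-weight spaces entirely, and it sidesteps the term-by-term differentiation of an $L^{2,\gamma}$-convergent series that the paper performs without explicit justification; the price is the frequency-by-frequency ODE analysis and the sign-of-$\nu$ case distinction, which the paper's algebraic identity handles automatically. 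Note also that your radial normal form $F_\nu(r)=r^{|\nu|}q_\nu(r^2)$ with the stated degree bounds can be read off directly from the representation $F(z,\bz)=\sum_{j=0}^{n}\bz^{j}\varphi_j(z)$ with $\varphi_j$ holomorphic (quoted in Section 2 of the paper), which would let you drop the equidimensional ODE discussion. Two small slips: in the integration by parts the test should be against $\partial_{\bz}^{n+1}\phi$ (with the appropriate sign, or against $\overline{\partial_{\bz}^{n+1}\phi}$ for the Hermitian pairing), not $\partial_z^{n+1}\phi$, since the latter would characterize anti-polyanalyticity; and you should say explicitly that Fubini justifies collapsing the pairing to the frequency-$(m-k)$ radial integral. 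Neither affects the validity of the argument.
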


The next result concerns the so-called the $n$-th true weighted true poly-Bergman space defined as  $\mathcal{A}_{0}^{2,\gamma}(\Di) = \AnTWDi(\Di)$ and 
$$\AnTWDi_{n}(\Di)= \HnWDi_n(\Di) \ominus \HnWDi_{n-1}(\Di); n\geq 1,$$
 and gives  the explicit closed expression of its  reproducing kernel function $K^{\gm}_n(z,w)$ 
in terms of the Gauss hypergeometric function
$$ {_2F_1}\left( \begin{array}{c} a , b \\ c \end{array}\bigg | x \right) = \sum_{j=0}^{\infty} \frac{(a)_j (b)_j}{(c)_j} \frac{x^j}{j!}.
$$

\begin{theorem}\label{thm2}
	
	The following assertions hold trues
	\begin{enumerate}
		\item[(i)] The family of functions $\mathcal{R}^{\gamma}_{m,n}$, for varying $m=0,1,2, \cdots$ form an orthogonal basis of $\AnTWDi_{n}(\Di)$.
		
		\item[(ii)] The spaces $\AnTWDi_{n}(\Di)$ form an orthogonal sequence of reproducing kernel Hilbert spaces in $L^{2,\gamma}(\Di)$, and we have  	$$L^{2,\gamma}(\Di) =  \bigoplus_{n=0}^\infty \AnTWDi_{n}(\Di) \quad \mbox{and} \quad \HnWDi_n(\Di) = \bigoplus_{k=0}^n \mathcal{A}_{k}^{2,\gamma}(\Di).$$
		
		\item[(iii)]  We have	 
		\begin{align} \label{RK-wBsm}
		K^{\gm}_n(z,w) &= \frac{(\gamma+n+1) }{\pi n! (\gamma+1)_n } \left(1-|z|^2 \right)^{-\gamma} \left(1-|w|^2 \right)^{-\gamma} \\
		& \times
		\partial_z^n\partial_{\bw}^n \left\{ \left(1-|z|^2 \right)^{\gamma+n} \left(1-|w|^2 \right)^{\gamma+n} {_2F_1}\left( \begin{array}{c} \gamma+n+2, \gamma+1 \\ \gamma+n+1 \end{array}\bigg | z\bw \right) \right\} . \nonumber
		\end{align} 
	\end{enumerate} 
\end{theorem}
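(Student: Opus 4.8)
The starting point is the orthogonality and completeness of the full disc-polynomial system $\{\mathcal{R}^{\gamma}_{m,k}\}_{m,k\geq 0}$ in $L^{2,\gamma}(\Di)$ recorded in Section~2, combined with Theorem~\ref{thm1}. For (i): Theorem~\ref{thm1} identifies $\HnWDi_n(\Di)$ with the closed span of $\{\mathcal{R}^{\gamma}_{m,k}:\, m\geq 0,\ 0\leq k\leq n\}$, and likewise $\HnWDi_{n-1}(\Di)$ with the span over $0\leq k\leq n-1$. Since the $\mathcal{R}^{\gamma}_{m,k}$ are pairwise orthogonal, the orthogonal difference $\AnTWDi_{n}(\Di)=\HnWDi_n(\Di)\ominus\HnWDi_{n-1}(\Di)$ is exactly the closed span of the remaining vectors $\{\mathcal{R}^{\gamma}_{m,n}:\, m\geq 0\}$, which is (i). For (ii): orthogonality of $\AnTWDi_{n}(\Di)$ and $\AnTWDi_{k}(\Di)$ for $n\neq k$ is inherited from the orthogonality of the basis vectors; the identity $L^{2,\gamma}(\Di)=\bigoplus_n\AnTWDi_{n}(\Di)$ follows from completeness of the whole disc-polynomial system, and $\HnWDi_n(\Di)=\bigoplus_{k=0}^n\AnTWDi_{k}(\Di)$ is the telescoping of the defining differences. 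Finally, each $\AnTWDi_{n}(\Di)$ is a reproducing kernel Hilbert space because the sub-mean-value estimate for polyanalytic functions shows that $L^{2,\gamma}$-convergence forces locally uniform convergence, so point evaluations are bounded.

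The substance is (iii). The plan is to sum the canonical kernel series guaranteed by part (i),
$$K^{\gm}_n(z,w)=\sum_{m=0}^{\infty}\frac{\mathcal{R}^{\gamma}_{m,n}(z,\bz)\,\overline{\mathcal{R}^{\gamma}_{m,n}(w,\bw)}}{\norm{\mathcal{R}^{\gamma}_{m,n}}^2}.$$
The key device is a Rodrigues-type representation of the disc polynomials in the form
$$\mathcal{R}^{\gamma}_{m,n}(z,\bz)=d_{m,n}\,(1-|z|^2)^{-\gamma}\,\partial_z^{\,n}\!\left[(1-|z|^2)^{\gamma+n}z^m\right],$$
obtained from the classical Rodrigues formula for $P^{(\gamma,|m-n|)}_{m\wedge n}$ after the substitution $x=2|z|^2-1$ and conversion of the radial derivative $d/dx$ into $\partial_z$. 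One checks (for instance at $n=1$, $m=1$, where $d_{1,1}=-1/(\gamma+1)$) that the terms assemble, via $\bz^k z^{m-n+k}=z^{m-n}|z|^{2k}$, into $z^{m-n}$ times a degree-$n$ polynomial in $|z|^2$, precisely matching \eqref{DiscePol}. This representation is exactly what produces the operator skeleton $(1-|z|^2)^{-\gamma}\partial_z^{\,n}[(1-|z|^2)^{\gamma+n}\,\cdot\,]$ appearing in \eqref{RK-wBsm}.

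First I would substitute this representation (and its conjugate in $w$) into the kernel series and interchange the summation with $\partial_z^{\,n}$ and $\partial_{\bw}^{\,n}$, which is legitimate by the locally uniform convergence of the series on $\Di\times\Di$. This pulls the weights and derivatives outside, leaving
$$K^{\gm}_n(z,w)=(1-|z|^2)^{-\gamma}(1-|w|^2)^{-\gamma}\,\partial_z^{\,n}\partial_{\bw}^{\,n}\!\left[(1-|z|^2)^{\gamma+n}(1-|w|^2)^{\gamma+n}\sum_{m=0}^{\infty}\frac{|d_{m,n}|^2}{\norm{\mathcal{R}^{\gamma}_{m,n}}^2}(z\bw)^m\right].$$
It then remains to compute $d_{m,n}$ and the norm $\norm{\mathcal{R}^{\gamma}_{m,n}}^2$ (a standard Jacobi-weight integral reducing to beta functions, reviewed in Section~2) and to show that
$$\frac{|d_{m,n}|^2}{\norm{\mathcal{R}^{\gamma}_{m,n}}^2}=\frac{\gamma+n+1}{\pi\,n!\,(\gamma+1)_n}\cdot\frac{(\gamma+n+2)_m\,(\gamma+1)_m}{(\gamma+n+1)_m\,m!},$$
so that the inner power series collapses to $\tfrac{\gamma+n+1}{\pi n!(\gamma+1)_n}\,{_2F_1}(\gamma+n+2,\gamma+1;\gamma+n+1\,|\,z\bw)$, which is exactly \eqref{RK-wBsm}.

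The main obstacle is the bookkeeping in the Rodrigues step: the $n$-fold $\partial_z$ acting on $(1-|z|^2)^{\gamma+n}z^m$ mixes holomorphic and antiholomorphic factors, and extracting the exact constant $d_{m,n}$ from the chain rule on the radial variable produces competing Pochhammer and binomial factors that must be tracked separately in the regimes $m\geq n$ and $m<n$. I would manage this by verifying the representation directly against \eqref{DiscePol} in both regimes, and I would validate the final constant by the reduction $n=0$: there $d_{m,0}=1$ and $\norm{\mathcal{R}^{\gamma}_{m,0}}^2=\pi B(m+1,\gamma+1)$, so the coefficient becomes $\tfrac{\gamma+1}{\pi}\tfrac{(\gamma+2)_m}{m!}$, the inner series sums to $\tfrac{\gamma+1}{\pi}(1-z\bw)^{-(\gamma+2)}$ via ${_2F_1}(\gamma+2,\gamma+1;\gamma+1\,|\,t)=(1-t)^{-(\gamma+2)}$, and \eqref{RK-wBsm} returns the weighted Bergman kernel \eqref{RK-wBs}. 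With $d_{m,n}$ and $\norm{\mathcal{R}^{\gamma}_{m,n}}^2$ in hand, the coefficient identity above is a routine manipulation of Pochhammer symbols.
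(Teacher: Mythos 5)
Your proposal follows essentially the same route as the paper: parts (i) and (ii) are deduced from Theorem \ref{thm1} together with the mutual orthogonality and completeness of the disc polynomials, and part (iii) is obtained by summing the canonical kernel series after inserting the same Rodrigues-type realization $\mathcal{R}_{m,n}^{\gamma}(z,\bz) = \frac{(-1)^n}{(\gamma+1)_n}(1-|z|^2)^{-\gamma}\partial_z^n\left(z^m(1-|z|^2)^{\gamma+n}\right)$ used in \eqref{ROpF1}, pulling $\partial_z^n\partial_{\bw}^n$ outside, and collapsing the remaining power series to the ${_2F_1}$ in \eqref{RK-wBsm}; your coefficient identity and your $n=0$ sanity check agree with the paper's computation. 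The one genuine divergence is your justification that $\AnTWDi_{n}(\Di)$ is a reproducing kernel Hilbert space via locally uniform convergence of the expansion (point evaluations bounded with a constant depending on the point), and here your route is actually the sound one: the paper's uniform bound uses $c=\bigl(\sum_{m}1/d_{m,n}^{\gamma}\bigr)^{1/2}$, but since $1/d_{m,n}^{\gamma}\sim C\, m^{\gamma+1}$ this series diverges for $\gamma>-1$ (equivalently, the displayed ${_2F_1}$ evaluated at $1$ has $c-a-b=-\gamma-2<0$), so the paper's constant is infinite and a locally uniform estimate on compacta of $\Di$ is what is really needed. Do make sure you actually supply such an estimate (e.g.\ via a bound on $\sum_m |\mathcal{R}^{\gamma}_{m,n}(z,\bz)|^2/d_{m,n}^{\gamma}$ locally uniformly in $z$), since the crude bound $|\mathcal{R}^{\gamma}_{m,n}|\le 1$ alone does not suffice.
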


\begin{remark}
	We have 
	$\AnTWDi_n(\Di) = \overline{Span  \left\{ \mathcal{R}^{\gamma}_{m,n} ; \, m=0,1,2, \cdots \right\}}^{L^{2,\gamma}(\Di)}$
	and the  sequential characterization is given by 
	\begin{align}\label{wpolyBerg}
	\AnTWDi_{n}(\Di) = \left\{ \sum_{m=0}^{+\infty}\alpha_{m}\mathcal{R}^{\gamma}_{m,n}(\xi,\overline{\xi}); \quad  \sum_{m=0}^\infty \frac{m! }{(\gamma+1+m+n)(\gamma+1)_m} |\alpha_n|^2 < +\infty \right\} . 
	\end{align}
\end{remark}

\begin{remark}
	The spaces $\mathcal{A}_{n}^{2,0}(\Di)$, for $\gamma=0$, are exactly the true poly-Bergman spaces \cite{Ramazanov1999,Vasilevski2000}, and  the polynomials in $e_{m,p}$ in \eqref{koshelevemp} 
	reduce further to $\mathcal{R}^{0}_{p,m}$. In this case, we recover the closed  expression obtained in \cite[Theorem 2]{Koshelev1977} and \cite[Theorem 3]{Ramazanov1999} for the reproducing kernel of $\HnWDi_{n}(\Di)$ and $\AnTWDi_{n}(\Di)$, respectively.
\end{remark}

\begin{remark}
	For $n=0$, we recover the closed  expression of the classical weighted Bergman spaces since 
	$$ {_2F_1}\left( \begin{array}{c} \gamma+2, \gamma+1 \\ \gamma+1 \end{array}\bigg | z\bw \right) = (1- z\bw)^{-\gamma-2}.$$ 
\end{remark}

\begin{remark}
	The orthogonal projection from $ L^{2,\gamma}(\Di)  $ onto  $ \AnTWDi_{n}(\Di) $ is given by the integral operator
	$$ \mathcal{P}^{0}_{n}(f)(\zeta)=\int_{\Di} f(z)
	K^{\gm}_n(z,\zeta) d\mu_\gm(z),$$
	while the one of $L^{2,\gamma}(\Di) $ onto  $\HnWDi_{n}(\Di)$ is given by 
	$$\mathcal{P}_{n }(f)(\zeta)=\int_{\Di} f(z)\left( \sum^{n}_{k=1}  K^{\gm}_n(z,\zeta) \right) d\mu_\gm(z).$$
\end{remark}

\section{Preliminaries on disc polynomials}

The disc polynomials $\mathcal{R}_{m,n}^\gamma (z,\bar z)$; $\gamma > -1$, defined through \eqref{DiscePol}, are given explicitly by
\begin{align} \label{expZ}
\mathcal{R}_{m,n}^{\gamma}(z,\bar{z})
= m!n!\sum_{j=0}^{\minmn }
\frac{(-1)^{j} (1 -  z\bz)^{j}} {j! (\gamma+1)_j } \frac{ z^{m-j}}{(m-j)! } \frac{\bz^{n-j} }{(n-j)!},
	\end{align}
	where $(a)_k=a(a+1) \cdots (a+k-1)$ denotes the Pochhammer symbol. 
	For the hypergeometric representation of $\mathcal{R}_{m,n}^\gamma$ in terms of the Gauss-hypergeometric function ${_2F_1}$, one can refer to e.g. \cite[p. 137]{Wunsch05},  \cite[p. 692]{Dunkl83}, \cite[p. 535]{Dunkl84}, or also \cite{Aharmim2015,ElHGhIn2015Arxiv}.
 Clearly, they are polynomials of the two conjugate complex variables $z=x+iy\in \Di$ and $\overline{z}=x-iy$; $x,y\in\R$, of degree $m$ and $n$, respectively. The suggested definition  
agrees with the ones provided by Koornwinder \cite{Koornwinder75,Koornwinder78}, Dunkl \cite{Dunkl83,Dunkl84}, and W\"unsche \cite{Wunsch05}.
 The limit case of $\gamma=-1$ leads to the so-called scattering polynomials that has emerged in the context of wave propagation in layered media \cite{Gibson2014}, while for $\gamma=0$, they turn out to be related to the radial Zernike polynomials $R^\nu_k(x)$, introduced by Zernike  \cite{Zernike34} in his framework on optical problems involving telescopes and microscopes, and playing an important role in expressing the wavefront data in optical tests 
 and in studying diffraction problems \cite{ZernikeBrinkman35}. More exactly, we have 
$$\mathcal{R}_{m,n}^0 (z,\bar z) =   (m+n)! e^{i(n-m)\arg z}  R^{n-m}_{m+n}(\sqrt{z\bz}); \, m\leq n.$$
	The orthogonality property 
 \begin{align} \label{OrthDiscPol}
 \int_{\Di} \mathcal{R}_{m,n}^{\gamma}(z,\overline{z}) \overline{\mathcal{R}_{j,k}^{\gamma}(z,\overline{z})}  d\mu_\gamma(z) = d_{m,n}^\gamma \delta_{m,j}\delta_{n,k}, 
 	\end{align}
 with $d\mu_\gm$ is as in \eqref{weightg}, follows by straightforward computation from its analogue  the Jacobi polynomials. The involved constant $d_{m,n}^\gamma$ is explicitly given by
 \begin{align} \label{Cst}
 d_{m,n}^\gamma = \frac{\pi  m! n!}{(\gamma+1+m+n)(\gamma+1)_m(\gamma+1)_n}.
 	\end{align}
For ulterior use, {mainly in proving the convergence of the relevant series}, we recall the following estimate \cite{Dunkl84,Kanjin2013}, $|\mathcal{R}_{m,n}^{\gamma}(z,\bar z)| \leq 1$ which holds true for every nonnegative integers $m$ and $n$,  real $\gamma >-1$ and $z\in \Di$.

 The proof of our main results relies essentially in the fact that $\mathcal{R}_{m,n}^\gamma (z,\bar z)$ form an orthogonal basis of the Hilbert space $L^{2,\gamma}(\Di)$ as quoted in \cite{Dunkl84,Kanjin2013,Wunsch05}. 

\begin{proposition}[] \label{OrthBasisR}
	The disc polynomials  $ \mathcal{R}_{m,n}^\gamma (z,\bar z)$ form a complete orthogonal system in the Hilbert space $L^{2,\gamma}(\Di)$.
\end{proposition}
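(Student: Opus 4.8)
The orthogonality of the system is already recorded in \eqref{OrthDiscPol}, being inherited from the orthogonality of the Jacobi polynomials via the change of variable $t=2|z|^2-1$; thus the only remaining content of the statement is the \emph{completeness} (totality) of the family $\{\mathcal{R}_{m,n}^\gamma\}_{m,n\geq 0}$ in $L^{2,\gamma}(\Di)$. The plan is to show that the closed linear span of the disc polynomials contains every monomial $z^a\bz^b$, and then that such monomials are already total in $L^{2,\gamma}(\Di)$.

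The first point rests on the explicit expansion \eqref{expZ}. Reading off the term $j=0$ and keeping the rest, one has
$$\mathcal{R}_{m,n}^{\gamma}(z,\bz) = z^m\bz^n + \sum_{j=1}^{\minmn} c_{m,n,j}\,(1-z\bz)^j\, z^{m-j}\bz^{n-j},$$
with explicit constants $c_{m,n,j}$; expanding each power $(1-z\bz)^j$ shows that $\mathcal{R}_{m,n}^{\gamma}$ is a linear combination of the monomials $z^a\bz^b$ with fixed difference $a-b=m-n$ and $\min(a,b)\leq \minmn$, the extremal monomial $z^m\bz^n$ occurring with coefficient one. Hence, for each fixed value of $m-n$, the disc polynomials form a triangular system relative to the monomials of the same angular degree, ordered by $\minmn$, with unit leading coefficients. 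Being triangular with invertible diagonal, this relation can be inverted, so every monomial $z^a\bz^b$ lies in $\mathrm{Span}\{\mathcal{R}_{m,n}^{\gamma}\}$, and the two spans coincide with the space of all polynomials in $z$ and $\bz$. For the second point I would invoke the Stone--Weierstrass theorem: the polynomials in $z$ and $\bz$ form a subalgebra of $C(\overline{\Di})$ that contains the constants, separates points and is stable under complex conjugation, hence is uniformly dense in $C(\overline{\Di})$; and since $\gamma>-1$ the measure $d\mu_\gamma$ of \eqref{weightg} is finite, so $C(\overline{\Di})$ is dense in $L^{2,\gamma}(\Di)$. Composing the two density statements gives the density of $\mathrm{Span}\{\mathcal{R}_{m,n}^{\gamma}\}$ in $L^{2,\gamma}(\Di)$, which together with the orthogonality proves the proposition.

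An alternative, more self-contained route avoids Stone--Weierstrass and mirrors the Jacobi origin of these polynomials. Decomposing $f\in L^{2,\gamma}(\Di)$ into its angular Fourier modes $f(re^{i\theta})=\sum_{k\in\Z}f_k(r)e^{ik\theta}$ reduces the relations $\scal{f,\mathcal{R}_{m,n}^{\gamma}}_\gm=0$ to, for each fixed $k$, a single family of radial identities involving one coefficient $f_{-k}$; the substitution $t=2r^2-1$ then recasts them as the assertion that a fixed function is orthogonal to all $P_m^{(\gamma,|k|)}$ in $L^2((-1,1),(1-t)^\gamma(1+t)^{|k|}\,dt)$, and one concludes by the classical completeness of the Jacobi polynomials there. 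I expect the only genuine subtlety in either approach to be bookkeeping rather than depth: in the radial route, one must check that the transformed function really lies in the weighted $L^2$ space, which works precisely because the factor $(1+t)^{|k|}$ of the Jacobi weight exactly absorbs the $r^{|k|}$ carried by the disc polynomials of angular frequency $k$; in the density route, the point to state carefully is the passage from $C(\overline{\Di})$ to $L^{2,\gamma}(\Di)$. Both the completeness of the Jacobi polynomials and the density of polynomials are classical, consistent with \cite{Dunkl84,Kanjin2013,Wunsch05}.
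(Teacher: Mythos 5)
Your proposal is correct, but your primary argument takes a genuinely different route from the paper's. The paper proves completeness by the dual criterion: it takes $h$ orthogonal to all $\mathcal{R}_{m,n}^\gamma$, passes to polar coordinates with the substitution $t=1-2r^2$, and for each angular frequency $k=m-n$ identifies the resulting radial integrals as the coefficients of $(1+t)^{\gamma/2}g_k(t)$ against the orthogonal basis $t\mapsto(1-t)^{|k|/2}(1+t)^{\gamma/2}P_s^{(|k|,\gamma)}(t)$ of $L^{2}([-1,1],dt)$; Jacobi completeness forces $g_k=0$ for every $k$, and a Fubini/Fourier-series argument then gives $h=0$ a.e. This is exactly the ``alternative, more self-contained route'' you sketch in your last paragraph, including the bookkeeping point you flag about the Jacobi weight absorbing the radial power $r^{|k|}$ carried by the angular mode. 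Your main route instead shows that the $\mathcal{R}_{m,n}^\gamma$ span all monomials $z^a\bz^b$ by a triangularity argument on \eqref{expZ} and then invokes Stone--Weierstrass together with the finiteness of $\mu_\gamma$ for $\gamma>-1$; this is sound and arguably more elementary, since it sidesteps the measurability and $L^2$-membership checks of the dual approach, at the cost of not organizing the system by angular frequency, which is the structure the paper exploits afterwards. One small correction to your triangularity step: the coefficient of the extremal monomial $z^m\bz^n$ in \eqref{expZ} is not $1$, because every term $j$ of the sum contributes to it (the expansion of $(1-z\bz)^j z^{m-j}\bz^{n-j}$ contains $(-1)^j z^m\bz^n$); the coefficient equals $\sum_{j=0}^{\minmn}\binom{m}{j}\binom{n}{j}\frac{j!}{(\gamma+1)_j}>0$, so the diagonal of your triangular system is still nonzero and the inversion goes through unchanged.
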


Since it is hard to find a rigorous proof of it in the literature, we include a succinct one here.

\begin{proof}[Proof of Proposition \ref{OrthBasisR}]
	By the orthogonality property \eqref{OrthDiscPol}, it is clear that the monomials $\mathcal{R}_{m,n}^\gamma (z,\bar z)$, belong to the Hilbert space $L^{2,\gamma}(\Di)$, under the condition $\gamma >-1$.
	We need to prove completeness of $\mathcal{R}_{m,n}^\gamma (z,\bar z)$ in $L^{2,\gamma}(\Di)$, let $h \in L^{2,\gamma}(\Di)$ such that $S_{m,n}(h):=\scal{h,\mathcal{Z}_{m,n}^\gamma }_{L^{2,\gamma}(\Di)}=0$, for all $m,n$. 
	Then, using polar coordinates as well as the explicit representation of $\mathcal{R}_{m,n}^\gamma$ in terms of the Jacobi polynomials and the change of variable $t=1-2r^{2}$, we can rewrite $S_{m,n}(h)$ as 
	\begin{align*}
	S_{m,n}(h) &= \int^{1}_{-1}\left( \int^{2\pi}_{0}h\left( \sqrt{\frac{1-t}{2}}e^{i\theta}\right) e^{i(m-n)\theta}d\theta\right)  P_{m\wedge n}^{\vert m-n\vert,\gamma}(t)(1-t)^{\vert m-n\vert/2}(1+t)^{\gamma}dt.	 
	\end{align*}
	By discussing the signum of the integer $k=m-n$, we conclude that $S_{m,n}(h) =0$, for all $m,n$, is equivalent to  
	$$\int^{1}_{-1}\left( \int^{2\pi}_{0}h\left( \sqrt{\frac{1-t}{2}}e^{i\theta}\right) e^{ik\theta}d\theta\right)  P_{s}^{\vert k\vert,\gamma}(t)(1-t)^{\vert k\vert/2}(1+t)^{\gamma}dt= 0,$$
	for all nonnegative integer $s$ (=$min(m,n)$).  
	Making use of the Cauchy inequality, we can show that occurring function
	$$t\mapsto (1+t)^{\gamma/2}g_{k}(t); \qquad  g_{k}:t\mapsto \int^{2\pi}_{0}h\left( \sqrt{\frac{1-t}{2}}e^{i\theta}\right) e^{ik\theta}d\theta $$ 
	belongs $L^{2}\left( \left[ -1,1\right],dt\right)$, for which it is known (see e.g. \cite{Koorwin2012}) that the functions $t\mapsto (1-t)^{\vert k\vert/2}(1+t)^{\gamma/2}P_{s}^{(\vert k\vert,\gamma)}(t),$ for varying $s$, form an orthogonal basis. Therefore, 
	$(1+t)^{\gamma/2}g_{k}=0$ and hence $g_{k}=0$ a.e on $[ -1,1]$, for every $k \in \Z$.
	Subsequently, for every $t\in [-1,1]\setminus N$, with $N:= \cup_k \{ t \in \left[-1,1 \right];  g_{k}\neq 0 \}$, we have 	$$\left[  0,2\pi\right] \ni\theta \mapsto h_{t}(\theta)=h\left( \sqrt{\frac{1-t}{2}}e^{i\theta}\right) $$
	belongs to $L^{2}\left( \left[ 0,2\pi\right];d\theta\right)$, by means of Fubini's theorem, and its Fourier transform vanishes at $k$, $\mathcal{F}(h_t) (-k) = g_k(t) = 0$ for $t\in [-1,1]\setminus N$. 
    Thus, the function $h_{t}=0$, a.e. on $\left[ 0,2\pi\right]$, and for almost every $t \in \left[ -1,1\right]$. 
	This completes the proof. 	
\end{proof}

From the explicit expression of the disc polynomials \eqref{expZ}, 
{one can see that they are a good class of polyanalytic functions} in the unit disc. In fact, 
 $n+1$-analyticity is characterized by those  functions in the form
$f(z,\bz)= \sum\limits_{j=0}^{n} \bz^j \varphi_j(z),$
with $\varphi_j$ being holomorphic functions in $\Di$; and which is equivalent to  be uniquely expressed as \cite{Dolzhenko1994}
$f(z,\bz)= P(z,\bz) + \sum\limits_{j=0}^{n} (1-|z|^2)^j \psi_j(z),$
where $P(z,\bz)$  is a polynomial in $z$ of degree $n$ and $\bz$ of degree at most $n$, and $\psi_j$ are holomorphic in $\Di$.

In the next section, we will explore the crucial role played by these polynomials in describing the so-called weighted true poly-Bergman spaces.

\section{Proofs of main results} 

Throughout this section, $\gamma >-1$ and  $L^{2,\gamma}(\Di)$ is as above. We define the $n$-the weighted poly-Bergman space $\HnWDi_n(\Di) $ to be the space of complex-valued functions $f=\Di \longrightarrow \C$ belonging to $L^{2,\gamma}(\Di)$ and satisfying the generalized Cauchy--Riemann equation $\partial^{n+1}_{\bz} f =0,$ to wit 
$$\HnWDi_n(\Di) :=   ker \partial^{n+1}_{\bz} \cap L^{2,\gamma}(\Di).$$

Theorem \ref{thm1} shows in particular that the introduced  spaces can  equivalently be defined by mean of the disc polynomials $ \mathcal{R}_{m,n}^\gamma $. 

\begin{proof}[Proof of Theorem \ref{thm1}] 
	We begin by proving that the weighted poly-Bergman spaces
	$\HnWDi_n(\Di) $ are closed subspaces of $L^{2,\gamma}(\Di)$
	and spanned by the disc polynomials $\mathcal{R}^{\gamma}_{m,k}$ for varying $m=0,1,2, \cdots$ and $k=0,1,\cdots,n$. To this end, notice first that any $f \in  L^{2,\gamma}(\Di)$ can be expanded as 
	\begin{align}\label{series} f(z) =\sum_{m=0}^\infty \sum_{j=0}^\infty a_{m,j} \mathcal{R}^{\gamma}_{m,j}(z,\bz)
	\end{align} 
	for some complex-valued constants $a_{m,n}$ satisfying the growth condition 
	$$ \norm{f}^2_\gm = 	\sum_{m=0}^\infty\sum_{j=0}^\infty d_{m,j}^\gamma |a_{m,j}|^2 < +\infty .$$
	The series in \eqref{series} is absolutely and uniformly convergent on compact sets of $\Di$.
	Therefore, by straightforward computation, we arrive at  
		\begin{align*} 
		\partial_{\bz}^{k+1}f(z)
& = n! \sum_{m=0}^\infty \sum_{\ell=0}^\infty\frac{ (\alpha+m+1)_{\ell} }{\ell! (\gamma+m+k+2)_{\ell}} a_{m,\ell+k+1}  \mathcal{R}^{\gamma+k+1}_{m,\ell}(z,\bz)  .
	\end{align*}
	This follows since \cite{Koornwinder75,Wunsch05,Aharmim2015} $$\partial_{\bz}^k \mathcal{R}^{\gamma}_{m,j}  = \varepsilon_{j-k}   \frac{n! (\gamma+m+1)_k (\gamma+1)_{m+j-k} }{(j-k)! (\gamma+1)_{m+j}}\mathcal{R}^{\gamma+k}_{m,j-k},$$ where $\varepsilon_{j-k}=1$ when $j\geq k$ and $\varepsilon_{j-k}=0$  otherwise. 
Subsequently, the coefficients $a_{m,\ell+k+1}$ are closely connected to the Fourier coefficients of $\partial_{\bz}^{k+1}f \in L^{2,\gamma+k+1}(\Di)$ (see e.g. \cite{Kanjin2013}). More precisely, we have 
$$ a_{m,\ell+k+1} 
= \frac{\ell! (\gamma+m+k+2)_{\ell}}{ n!(\alpha+m+1)_{\ell}  d_{m,l}^{\gamma+k+1}} \scal{ \partial_{\bz}^{k+1}f, \mathcal{R}^{\gamma+k+1}_{m,\ell}}_{\gamma+k+1} 
.$$
Thus,  $f\in\HnWDi_n(\Di) $ is equivalent to $a_{m,\ell+k+1} =0$ for any nonnegative integers $m,\ell$. This infers 
	\begin{align}\label{series2} f(z) = \sum_{m=0}^\infty\sum_{j=0}^{n} a_{m,j} \mathcal{R}^{\gamma}_{m,j}(z,\bz)
\end{align}
in $L^{2,\gamma}(\Di)$ and hence
$$\HnWDi_n(\Di) = \overline{Span\{\mathcal{R}^{\gamma}_{m,j}; m=0,1, \cdots, \, j=0,1, \cdots,n \}}^{L^{2,\gamma}(\Di)}.$$
Thus, $\HnWDi_n(\Di)$ is clearly closed subspace of $L^{2,\gamma}(\Di)$.
This completes the proof of Theorem \ref{thm1}.
\end{proof}

\begin{proof}[Proof of Theorem \ref{thm2}] 
	An immediate consequence of the above discussion in the proof of Theorem \ref{thm1},  we claim that  
	the orthogonal Hilbertian decompositions 
	$$\displaystyle L^{2,\gamma}(\Di) =  \bigoplus_{n=0}^\infty \AnTWDi_{n}(\Di) \quad  \mbox{and} \quad \HnWDi_n(\Di) = \bigoplus_{k=0}^n \mathcal{A}_{k}^{2,\gamma}(\Di)$$
	hold trues. We need only to prove that the family of functions $\mathcal{R}^{\gamma}_{m,n}$; $m=0,1,2, \cdots,$ form an orthogonal basis of $\AnTWDi_{n}(\Di)= \HnWDi_n(\Di) \ominus \HnWDi_{n-1}(\Di)$ 
	Indeed, by considering 
	$$ \mathcal{B}^{2,\gamma}_n(\Di):= 
	\overline{Span  \left\{ \mathcal{R}^{\gamma}_{m,n} ; \, m=0,1,2, \cdots \right\}}^{L^{2,\gamma}(\Di)},$$
	it is clear that they form an orthogonal sequence of Hilbert spaces in $L^{2,\gamma}(\Di)$. The orthogonality follows from the orthogonality property of disc polynomials. Moreover, we claim that 
	$\HnWDi_n(\Di) = \bigoplus_{k=0}^n \mathcal{B}^{2,\gamma}_k(\Di)$ thanks to Theorem \ref{thm1} and $\mathcal{B}^{2,\gamma}_n(\Di)\subset  \AnTWDi_{n}(\Di) $ which holds by induction. Hence $\mathcal{B}^{2,\gamma}_n(\Di)=  \AnTWDi_{n}(\Di)$. The fact that 
	$\mathcal{A}_{n}^{2,\gamma}(\Di)$ is a reproducing kernel Hilbert space follows mainly using Riesz theorem on the representation for the linear mapping $f \longmapsto f(z_0)$, for fixed $z$, being  continuous. Subsequently, there exists some element $ K^\gamma_{n,z_0} = K^\gamma_{n}(\cdot,z_0)\in \AnTWDi_{n}$ such that $ f(z) = \scal{f , K^\gamma_{n,z_0}}_{\gamma}$ for every $f \in \AnTWDi_{n}$.
	In fact,  any $f\in \AnTWDi_{n}(\Di)$, can be expand as $f(z) = \sum_{m=0}^\infty  a_{m} \mathcal{R}^{\gamma}_{m,n}(z,\bz)$. Then, by Cauchy Schwartz inequality and that fact $\mathcal{R}^{\gamma}_{m,n}(z,\bz)\leq 1$, we get  
	\begin{align*} 
	|f(z)| \leq  \sum_{m=0}^\infty  |a_{m}| 
	\leq  c \left(  \sum_{m=0}^\infty  d_{m,n}^\gamma |a_{m}|^2\right)^{1/2}  \leq c \norm{f}_{\gm}
	\end{align*}
	with 
	\begin{align*} 
	c :=  \left(  \sum_{m=0}^\infty \frac{1}{d_{m,n}^\gamma} \right)^{1/2}  =   \left( \frac{(\gamma+n+1) (\gamma+1)_n}{\pi n! }	 {_2F_1}\left( \begin{array}{c} \gamma+n+2, \gamma+1 \\ \gamma+n+1 \end{array}\bigg | 1\right)\right)^{1/2}. 
	\end{align*}
	This completes  our check of $(i)$ and $(ii)$. 
	
	To prove $(iii)$, we know from general theory of reproducing kernel separable Hilbert spaces, the reproducing kernel function $ K^\gamma_{n}$ is uniquely determined by 
	$$ K^{\gm}_n(z,w)=	\sum_{m=0}^{+\infty}  
  \psi_{m}(z)  \overline{\psi_{m}(w)}  $$
for any complete orthonormal $\psi_{m}$ of $\AnTWDi_{n}$.  
Thus, the closed formula of $K^{\gm}_n(z,w)$ in terms of successive derivatives of special function follows by direct computation using the disc polynomials $\mathcal{R}^{\gamma}_{m,n}$. In fact, by means of $\overline{\mathcal{R}^{\gamma}_{m,n}(w,\bw)}  = \mathcal{R}^{\gamma}_{m,n}(\bw,w)$ as well as the observation that the disc polynomials can be realized as 
	\begin{align} \label{ROpF1} 
	\mathcal{R}_{m,n}^{\gamma}(z,\bz) = \frac{(-1)^m}{(\gamma+1)_m} (1-|z|^2)^{-\gamma}
	\partial_{z}^{n} \left( z^m (1-|z|^2)^{\gamma+n} \right) 
	\end{align}
	and 
		\begin{align} \label{ROpF12} 
	\overline{\mathcal{R}_{m,n}^{\gamma}(w,\bw)} = \frac{(-1)^m}{(\gamma+1)_m} (1-|w|^2)^{-\gamma}
	\partial_{\bw}^{n} \left( \bw^m (1-|w|^2)^{\gamma+n} \right) ,
	\end{align}
 we get
	\begin{align*} 	
K^{\gm}_n(z,w)&=	\sum_{m=0}^{+\infty}  
\frac{\mathcal{R}^{\gamma}_{m,n}(z,\bz)  \overline{\mathcal{R}^{\gamma}_{m,n}(w,\bw)} }{ d_{m,n}^\gamma }      
 \\ &= 
\frac{(1-|z|^2)^{-\gamma}(1-|w|^2)^{-\gamma}}{[(\gamma+1)_n]^2}	   \partial_{z}^{n} \partial_{\bw}^{n}\left\{(1-|z|^2)^{\gamma+n}  (1-|w|^2)^{\gamma+n} \Xi_n^\gamma(z,w) \right\} 
 	\end{align*}
 with
 	\begin{align*}
\Xi_n^\gamma(z,w) &:=  \sum_{m=0}^{+\infty}\frac{z^m \bw^m }{ d_{m,n}^\gamma } = \frac{(\gamma+1)_n}{\pi n! } 	
  \sum_{m=0}^{+\infty}  
  (\gamma+m+n+1) (\gamma+1)_m  \frac{ z^m \bw^m }{  m!  }  
 	 \\&=	\frac{(\gamma+n+1) (\gamma+1)_n}{\pi n! }	 {_2F_1}\left( \begin{array}{c} \gamma+n+2, \gamma+1 \\ \gamma+n+1 \end{array}\bigg | z\bw \right) .  
 		\end{align*} 
 The last equality follows making use of the fact that 
 $$(\gamma+m+n+1)=\frac{(\gamma+n+1)_m(\gamma+1)_n}{(\gamma+1)_m}, $$
 keeping in mind the definition of the hypergeometric function
 $ {_2F_1}$. This completes the check of  the  closed expression in \eqref{RK-wBsm}.
\end{proof}

\begin{remark}
The introduced spaces are closely connected to the spectral theory of the magnetic Laplacian 
$ \mathfrak{L}_\nu  = -\Delta  
- \nu N + \nu^2 |z|^2$, seen as
self-adjoint and elliptic second order differential operator \cite{Zhang92,GhIn2005}.
It is  essentially the Laplace--Beltrami operator
$$\Delta:=(1 - |z|^2)^2\frac {\partial^2} {\partial z\partial \bar z}$$
perturbed by the rotation operator
$$ N =  (1 - |z|^2)\left(z\frac {\partial}{\partial z}- {\bar z}\frac{\partial}{\partial{\bar z}}\right).$$
Geometrically, this is the magnetic Schr\"odinger operator \cite{GhIn2005}
$ 
\mathfrak{L}_\nu = (d+i\nu ext(\theta))^{*}(d+i\nu ext(\theta)),
$ 
associated to the hyperbolic geometry on the unit disc endowed with its Poincaré metric.
Here $d$ and $ext(\theta)$ are respectively the differential operator and the exterior multiplication by the differential $1$-form ( potential vector) $\theta(z) = (\partial -\overline{\partial}) \log (1-|z|^2)$. 
The adjoint operation is taken with respect to the Hermitian scalar product on compactly supported differential forms 
involving the Hodge star operator canonically associated to the hyperbolic metric. 
\end{remark}





\end{document}